\numberwithin{equation}{section}
 \DeclareMathOperator{\dist}{dist}
\renewcommand{\phi}{\varphi}
\newtheorem{Thm}{Theorem}[section]
\newtheorem{theorem}[Thm]{Theorem}
\newtheorem{question}{Question}
\newtheorem{lemma}[Thm]{Lemma}
\begin{document}
\sloppy
\title[Uniqueness of Gabor series]{Uniqueness of Gabor series}
\author{Yurii Belov}
\address{Yurii Belov,
\newline Chebyshev Laboratory, St.~Petersburg State University, St. Petersburg, Russia,
\newline {\tt j\_b\_juri\_belov@mail.ru}
}
\thanks{Author was supported by RNF grant 14-21-00035.}

\begin{abstract} We prove that any complete and minimal Gabor system of Gaussians is a Markushevich basis in $L^2(\mathbb{R})$.
\end{abstract}


\maketitle

\section{Introduction}

Let $\Lambda\subset\mathbb{R}^2$ be a sequence of distinct points. With each such sequence we associate Gabor system
\begin{equation}
\mathcal{G}_\Lambda:=\{e^{2\pi i yt}e^{-\pi(t-x)^2}\}_{(x,y)\in\Lambda}.
\end{equation}
Function $e^{2\pi i yt}e^{-\pi(t-x)^2}$ can be viewed  as the time--frequency shift of the Gaussian $e^{-\pi t^2}$ in the phase space. It is well known that system $\mathcal{G}_\Lambda$ cannot be a Riesz basis in $L^2(\mathbb{R})$ (see e.g \cite{S}). On the other hand, there exist a lot of {\it complete and minimal} systems $\mathcal{G}_\Lambda$. A canonical example is the lattice  without one point, $\Lambda:= \mathbb{Z}\times\mathbb{Z}\setminus\{(0,0)\}$. However, the generating sets $\Lambda$ can be very far from any lattice. For example, in \cite{als} it was shown that there exists $\Lambda\subset\mathbb{R}\times\{0\}\cup\{0\}\times\mathbb{R}$ such that $\mathcal{G}_\Lambda$ is complete and minimal in $L^2(\mathbb{R})$.   

If $\mathcal{G}_\Lambda$ is complete and minimal, then there exists the unique biorthogonal system $\{g_{(x,y)}\}_{(x,y)\in\Lambda}$. So, for any $f\in L^2(\mathbb{R})$ we may write the formal Fourier series with respect to the system $\mathcal{G}_\Lambda$
\begin{equation}
f\sim \sum_{(x,y)\in\Lambda}(f,g_{(x,y)})_{L^2(\mathbb{R})}e^{2\pi i yt}e^{-\pi(t-x)^2}.
\label{mainser}
\end{equation}
If $\Lambda =  \mathbb{Z}\times\mathbb{Z}\setminus\{(0,0)\}$, then it is known that there exists a linear summation method for the series \eqref{mainser} (e.g. one can use methods from \cite{ls}). In \cite{ls} this was proved for certain sequences similar to lattices. The main point of the present note is to show that {\it any } series \eqref{mainser} defines an element $f$  uniquely.  
\begin{theorem}
Let $\mathcal{G}_\Lambda$ be a complete and minimal system in $L^2(\mathbb{R})$. Then the biorthogonal system $\{g_{(x,y)}\}_{(x,y)\in\Lambda}$ is complete. So, any function $f\in L^2(\mathbb{R})$ is uniquely determined by the coefficients $(f,g_{(x,y)})$. 
\label{mainth}
\end{theorem}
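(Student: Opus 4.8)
The plan is to pass to the Fock space by the Bargmann transform, show that the biorthogonal functions all arise from one fixed entire function by dividing off a linear factor, and then close the argument with a Cauchy transform whose vanishing is controlled by the fact that the underlying set is a set of uniqueness.

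\emph{Step 1 (reduction to the Fock space).} Let $\mathcal{F}$ be the Fock space of entire functions $F$ with $\|F\|^2=\int_{\mathbb{C}}|F(z)|^2e^{-\pi|z|^2}\,dA(z)<\infty$, whose reproducing kernel is $K_w(z)=e^{\pi\bar wz}$, and let $B\colon L^2(\mathbb{R})\to\mathcal{F}$ denote the Bargmann transform. Identifying $(x,y)\in\mathbb{R}^2$ with a point of $\mathbb{C}$, the shift $e^{2\pi iyt}e^{-\pi(t-x)^2}$ goes under $B$ to a unimodular multiple of the normalised kernel $\kappa_\lambda=K_\lambda/\|K_\lambda\|$, so $\mathcal{G}_\Lambda$ corresponds to $\{\kappa_\lambda\}_{\lambda\in\Lambda}$. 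Since $F\perp\kappa_\lambda$ exactly when $F(\lambda)=0$, completeness of $\mathcal{G}_\Lambda$ says that $\Lambda$ is a set of uniqueness for $\mathcal{F}$, while minimality says that no $\Lambda\setminus\{\lambda_0\}$ is. The biorthogonal system is carried to $\{G_\lambda\}\subset\mathcal{F}$ with $G_\lambda(\mu)=\|K_\mu\|\,\delta_{\lambda\mu}$; thus each $G_\lambda$ vanishes on $\Lambda\setminus\{\lambda\}$ with $G_\lambda(\lambda)\neq0$, and the theorem is equivalent to: the only $H\in\mathcal{F}$ orthogonal to every $G_\lambda$ is $H=0$.

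\emph{Step 2 (rigidity of the $G_\lambda$).} One first shows $\{F\in\mathcal{F}:F|_{\Lambda\setminus\{\lambda\}}=0\}=\mathbb{C}\,G_\lambda$: the space is nontrivial by minimality, and dimension $\geq2$ would let a combination vanish also at $\lambda$, hence on all of $\Lambda$, hence identically. Next, $G_\lambda$ has no zero outside $\Lambda\setminus\{\lambda\}$ and all its zeros are simple, for an extra zero or a multiple zero at $w$ would make $(z-\lambda)G_\lambda(z)/(z-w)$ an entire function in $\mathcal{F}$ (dividing by $z-w$ only improves the growth, and the singularity is removable) vanishing on all of $\Lambda$, hence $\equiv0$. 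So $\Phi_\lambda(z):=(z-\lambda)G_\lambda(z)$ has zero set exactly $\Lambda$, all zeros simple. Finally the $\Phi_\lambda$ are pairwise proportional: by the one--dimensionality, a scalar combination of $G_\lambda(z)/(z-\mu)$ and $G_\mu(z)/(z-\lambda)$ --- both in $\mathcal{F}$, both vanishing on $\Lambda\setminus\{\lambda,\mu\}$ --- that vanishes at $\lambda$ too is a multiple of $G_\mu$, and clearing denominators gives $\Phi_\lambda=(\alpha+\beta(z-\lambda))\Phi_\mu$ with $\beta\neq0$ forbidden (double zero). Hence there is one entire function $\Phi$ with zero set exactly $\Lambda$ (simple zeros) and constants $c_\lambda\neq0$ so that $G_\lambda(z)=c_\lambda\Phi(z)/(z-\lambda)$. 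Here $\Phi\notin\mathcal{F}$ (it would vanish on the uniqueness set $\Lambda$), $\Phi/(z-\lambda)\in\mathcal{F}$, and, since $|F(z)|e^{-\pi|z|^2/2}\to0$ for $F\in\mathcal{F}$, the representation $\Phi=c_\mu^{-1}(z-\mu)G_\mu$ gives $|\Phi(z)|e^{-\pi|z|^2/2}=o(|z|)$.

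\emph{Step 3 (a Cauchy transform; conclusion).} Let $H\in\mathcal{F}$ be orthogonal to all $G_\lambda$, i.e. $\int_{\mathbb{C}}\overline{H(z)}\Phi(z)(z-\lambda)^{-1}e^{-\pi|z|^2}\,dA(z)=0$ for each $\lambda\in\Lambda$ (absolutely convergent, since $\Phi/(z-\lambda)\in\mathcal{F}$). Fix $\lambda_0\in\Lambda$ and set
\[
R(\zeta):=(\zeta-\lambda_0)\int_{\mathbb{C}}\frac{\overline{H(z)}\,\Phi(z)}{(z-\zeta)(z-\lambda_0)}\,e^{-\pi|z|^2}\,dA(z),
\]
well defined for all $\zeta$ (near $\zeta\in\Lambda$ the factor $\Phi$ cancels the extra pole). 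Then $\bar\partial R(\zeta)=-\pi\,\overline{H(\zeta)}\Phi(\zeta)e^{-\pi|\zeta|^2}$; a partial--fraction computation together with the hypothesis gives $R(\lambda)=0$ for every $\lambda\in\Lambda$; and the bound $|\Phi(z)|e^{-\pi|z|^2/2}=o(|z|)$ with Cauchy--Schwarz yields $|R(\zeta)|\leq C(1+|\zeta|)e^{\pi|\zeta|^2/2}$. It remains to deduce that $\bar\partial R\equiv0$, i.e. $\overline{H}\Phi e^{-\pi|\cdot|^2}\equiv0$, which forces $H\equiv0$ (as $\Phi\not\equiv0$); transporting back by $B^{-1}$ then proves that $\{g_{(x,y)}\}$ is complete and hence that $f\mapsto\bigl((f,g_{(x,y)})\bigr)_{(x,y)\in\Lambda}$ is injective, which is Theorem~\ref{mainth}. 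This last deduction is the point I expect to be the main obstacle: one must combine the vanishing of $R$ on the set of uniqueness $\Lambda$ with the genuine $L^2$-decay carried by $H\in\mathcal{F}$ --- not merely the pointwise Fock bound --- perhaps after dividing $R$ by $\Phi$, so as to confine $R$ to a growth class in which vanishing on $\Lambda$ already forces $R$ to be holomorphic and then identically zero.
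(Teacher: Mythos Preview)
Your Steps~1 and~2 are correct and coincide with the paper's reduction: everything is transported to the Fock space $\mathcal{F}$, and the biorthogonal system is identified with $\{\Phi(z)/(z-\lambda)\}_{\lambda\in\Lambda}$ for a single generating function $\Phi$ (the paper's $F$).

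The genuine gap is in Step~3, exactly where you flag it. Your Cauchy transform $R$ is \emph{not holomorphic}: you correctly compute $\bar\partial R=-\pi\,\overline{H}\,\Phi\,e^{-\pi|\cdot|^2}$, and then want to conclude that this vanishes from $R|_\Lambda=0$ together with a pointwise growth bound. But the statement that $\Lambda$ is a uniqueness set for $\mathcal{F}$ says nothing about smooth functions; it constrains only holomorphic ones. Dividing by $\Phi$ does not help: $R/\Phi$ (which is indeed locally bounded, since $\bar\partial R$ also vanishes on $\Lambda$) satisfies $\bar\partial(R/\Phi)=-\pi\,\overline{H}\,e^{-\pi|\cdot|^2}$, which is still nonzero and carries no information about $\Lambda$. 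There is no growth class for non-holomorphic functions in which vanishing on $\Lambda$ forces anything, so the argument stalls with no visible mechanism to finish.

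The paper avoids this by replacing the continuous Cauchy transform with a \emph{discrete} one over the integer lattice $\mathcal{Z}=\mathbb{Z}+i\mathbb{Z}$, exploiting the Weierstrass function $\sigma_0(z)=\sigma(z)/z$ as a second generating function with explicit asymptotics $|\sigma(z)|\asymp\dist(z,\mathcal{Z})\,e^{\pi|z|^2/2}$. The orthogonal element $S$ is expanded against the complete minimal system $\{k_w/\|k_w\|\}_{w\in\mathcal{Z}\setminus\{0\}}$, giving coefficients $b_w$ with $|b_w|^2\lesssim\log(1+|w|)$, and an interpolation identity shows that the lattice sum
\[
L(z)=\sum_{w\in\mathcal{Z}\setminus\{0\}}\frac{F(w)\,\overline{b_w}}{(z-w)(w-\lambda_1)(w-\lambda_2)\|k_w\|}
\]
vanishes on $\Lambda\setminus\{\lambda_1,\lambda_2\}$. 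Since $L$ is \emph{meromorphic} with poles only on $\mathcal{Z}$, this forces $L=F\,T\big/\bigl((z-\lambda_1)(z-\lambda_2)\sigma_0\bigr)$ for a genuinely entire function $T$ with $T(w)=\overline{b_w}\,\sigma_0'(w)/\|k_w\|$ on the lattice. Now one is back in the holomorphic world, where zero-counting and growth arguments apply: if $T$ has two zeros $t_1,t_2$, the explicit estimates show $F\,T/\bigl((z-t_1)(z-t_2)\bigr)\in\mathcal{F}$, contradicting completeness of $\{k_\lambda\}$; if $T$ has at most one zero, it must be $e^{P(z)}(a_1z-a_0)$ with $\deg P\le 2$, contradicting the lattice bound $|T(w)|\lesssim|w|^{-1}\log^{1/2}(1+|w|)$. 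The lattice/$\sigma$-function construction is precisely the idea missing from your approach: it manufactures a holomorphic object $T$ where your $R$ is only smooth.
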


This property is by no means automatic for an arbitrary system of vectors. Indeed, if $\{e_n\}^\infty_{n=1}$ is an orthonormal basis in a separable Hilbert space, then $\{e_1+e_n\}^\infty_{n=2}$ is a complete and minimal system but its biorthogonal $\{e_n\}^\infty_{n=2}$ is not complete. A complete and minimal system in a Hilbert space with complete biorthogonal system is called {\it Markushevich basis}. 

Theorem \ref{mainth} is analogous to Young's theorem \cite{Young} for systems of complex exponentials $\{e^{i\lambda_n t}\}$ in $L^2$ of an interval. However, the structure of complete and minimal systems for Gabor systems is more puzzling than for the systems of exponentials on an interval. For example, if $\Lambda$ generates a complete and minimal system of exponentials in $L^2(-\pi,\pi)$, then the upper density of $\Lambda$ ($=\lim_{r\rightarrow\infty}\#(\Lambda\cap\{|\lambda|<r\})(2r)^{-1}$) is equal to $1$; see Theorem 1 in Lecture 17 of \cite{Lev}. On the other hand,
if $\mathcal{G}_\Lambda$ is a complete and minimal Gabor system, then the upper density of $\Lambda$ ( $=\lim_{r\rightarrow\infty}\#(\Lambda\cap\{x^2+y^2\leq r^2\})(\pi r^2)^{-1}$) can vary from $2\slash\pi$ to $1$; see Theorem 1 in \cite{als}. 
If, in addition, $\Lambda$ is a regular distributed set, then the upper density have to be from $2\slash\pi$ to $1$; see Theorem 2 in \cite{als}.

Note that for some systems of special functions (associated to some canonical system of differential equations) in $L^2$ of an interval completeness of biorthogonal system may fail (even with infinite defect); see \cite[Proposition 3.4]{bb}.

In the next section we transfer our problem to Fock space of entire functions. The last section is devoted to the proof of our result.

\textbf{Notations.} Throughout this paper the notation $U(x)\lesssim V(x)$ means that there is a constant $C$ such that
$U(x)\leq CV(x)$ holds for all $x$ in the set in question, $U, V\geq 0$. We write $U(x)\asymp V(x)$ if both $U(x)\lesssim V(x)$ and
$V(x)\lesssim U(x)$. 

\section{Reduction to a Fock space problem}

Let
$$\mathcal{F}:=\{F\text{ is entire  and } \int_\mathbb{C}|F(z)|^2e^{-\pi|z|^2}dm(z)<\infty\};$$
here $dm$ denotes the planar Lebesgue measure. It is well known that the following Bargmann transform
$$\mathcal{B}f(z):=2^{1\slash4}e^{-i\pi xy}e^{\frac{\pi}{2}|z|^2}\int_\mathbb{R}f(t)e^{2\pi i yt}e^{-\pi (t-x)^2}dt$$ $$=
2^{1\slash4}\int_{\mathbb{R}}f(t)e^{-\pi t^2}e^{2\pi tz}e^{-\frac{\pi}{2} z^2}dt, \quad z=x+iy,$$
 is a unitary map between $L^2(\mathbb{R})$ and the Fock space $\mathcal{F}$; see \cite{Fol, Gro} for the details.

Moreover, the time--frequency shift of the Gaussian is mapped to the normalized reproducing kernel of $\mathcal{F}$ 
\begin{equation}
2^{1\slash4}\mathcal{B}(e^{2\pi i  ut}e^{-\pi(t-v)^2})(z)=e^{-\pi|w|^2\slash2 }e^{\pi\overline{w}z}= \frac{k_w(z)}{\|k_w\|_\mathcal{F}},\quad w = u- iv,\quad k_w(z):=e^{\pi\bar{w}z}.
\end{equation}
The existence of such transformation allows us to apply methods from the theory of entire functions. For that reason the results about  time--frequency shifts of the Gaussians are stronger than for the time--frequency shifts of other elements of $L^2(\mathbb{R})$. 

\begin{lemma} The system $\mathcal{G}_\Lambda$ is complete and minimal in $L^2(\mathbb{R})$ if and only if the system of reproducing kernels $\bigl{\{}\frac{k_\lambda(z)}{\|k_\lambda\|}\bigr{\}}_{\lambda\in\Lambda}$ is complete and minimal in $\mathcal{F}$.
\label{l1}
\end{lemma}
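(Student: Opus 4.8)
The plan is to deduce the lemma directly from the fact, recalled above, that the Bargmann transform $\mathcal{B}$ is a unitary operator from $L^2(\mathbb{R})$ onto $\mathcal{F}$. Being unitary, $\mathcal{B}$ is in particular a linear homeomorphism, so it carries closed subspaces to closed subspaces and commutes with the operation of taking closed linear spans: $\mathcal{B}(\closspan S) = \closspan \mathcal{B}(S)$ for every $S \subset L^2(\mathbb{R})$. Consequently $S$ is complete in $L^2(\mathbb{R})$ if and only if $\mathcal{B}(S)$ is complete in $\mathcal{F}$, and $S$ is minimal (that is, no element of $S$ lies in the closed span of the others) if and only if $\mathcal{B}(S)$ is minimal in $\mathcal{F}$.

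Next I would record the correspondence between indices. For $\lambda = (x,y) \in \Lambda$ set $w(\lambda) := y - ix \in \mathbb{C}$; since $\Lambda$ consists of distinct points, $\lambda \mapsto w(\lambda)$ is injective, and the identity displayed just before the statement (with $u=y$, $v=x$) gives
\[
2^{1/4}\,\mathcal{B}\bigl(e^{2\pi i y t}e^{-\pi(t-x)^2}\bigr) = \frac{k_{w(\lambda)}}{\|k_{w(\lambda)}\|_{\mathcal{F}}}.
\]
Thus, up to the fixed nonzero scalar $2^{-1/4}$ applied to each vector, $\mathcal{B}$ maps the Gabor system $\mathcal{G}_\Lambda$ onto the system of normalized reproducing kernels $\{k_{w(\lambda)}/\|k_{w(\lambda)}\|\}_{\lambda \in \Lambda}$, which is exactly the system written $\{k_\lambda/\|k_\lambda\|\}_{\lambda\in\Lambda}$ in the statement under the identification $\lambda \leftrightarrow w(\lambda)$.

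Finally, multiplying every vector of a system by a nonzero constant changes neither its closed linear span nor the closed linear spans of its subfamilies, hence affects neither completeness nor minimality. Combining this observation with the two preceding paragraphs yields the asserted equivalence. I do not expect any substantive obstacle here: the content is entirely carried by the unitarity of $\mathcal{B}$ together with the reproducing-kernel identity, both already at our disposal; the only points needing a word of care are the bookkeeping of the index identification $\lambda \leftrightarrow w(\lambda)$ and the (harmless) presence of the normalizing scalars.
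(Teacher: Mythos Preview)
Your proof is correct and follows essentially the same approach as the paper: both arguments rest entirely on the unitarity of the Bargmann transform together with the identity sending time--frequency shifts of the Gaussian to normalized reproducing kernels. The only cosmetic difference is in the index bookkeeping: the paper first observes (via complex conjugation on $L^2(\mathbb{R})$) that $\mathcal{G}_\Lambda$ is complete and minimal if and only if $\mathcal{G}_{\overline{\Lambda}}$ is, and then applies $\mathcal{B}$, whereas you apply $\mathcal{B}$ directly and absorb the discrepancy into the explicit relabeling $\lambda=(x,y)\mapsto w(\lambda)=y-ix$.
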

\begin{proof} 
The system $\mathcal{G}_\Lambda$ is complete and minimal if and only if the system  $\mathcal{G}_{\overline{\Lambda}}$ is complete and minimal.
Now Lemma \ref{l1} immediately follows from the unitarity of Bargmann transform. 
\end{proof}
In many spaces of entire functions the system biorthogonal to the system of reproducing kernels can be described via the generating function; see e.g. Theorem 4 in Lecture 18 of \cite{Lev} (this idea goes back to Paley and Wiener).
\begin{lemma}
The system $\{k_\lambda\}_{\lambda\in\Lambda}$ is complete and minimal in $\mathcal{F}$ if and only if there exists an entire function $F$ such that
$F$ has simple zeros exactly at $\Lambda$, $\frac{F(z)}{z-\lambda}$ belongs to $\mathcal{F}$ for some (any) $\lambda\in\Lambda$ and there is no non-trivial entire function $T$ such that $FT\in\mathcal{F}$.
\label{l2}
\end{lemma}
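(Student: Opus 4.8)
The plan is to exploit that $\mathcal{F}$ is a reproducing kernel Hilbert space with kernel $k_w(z)=e^{\pi\bar w z}$, so that $\langle G,k_w\rangle_{\mathcal{F}}=G(w)$ for every $G\in\mathcal{F}$; in particular $G\perp k_\lambda$ if and only if $G(\lambda)=0$, which is the bridge between the functional-analytic statement and the zero sets of functions in $\mathcal{F}$. The only analytic input I will use is the \emph{division property} of $\mathcal{F}$: if $G\in\mathcal{F}$ and $G(a)=0$, then $\frac{G(z)}{z-a}\in\mathcal{F}$ (near $z=a$ the quotient is bounded since $G(z)=O(z-a)$, while away from $a$ it is dominated by $|G|$; a standard subharmonicity estimate upgrades this to the $L^2$ bound). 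Iterating, $G$ may be divided by $\prod_{j}(z-a_j)$ for finitely many zeros $a_j$, and, via the identity $\frac{z-\mu}{z-\nu}=1+\frac{\nu-\mu}{z-\nu}$, this shows the requirement ``$\frac{F(z)}{z-\lambda}\in\mathcal{F}$'' is independent of the choice of $\lambda\in\Lambda$.

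\emph{Completeness and minimality $\Rightarrow$ existence of $F$.} Assume $\{k_\lambda\}_{\lambda\in\Lambda}$ is complete and minimal and fix $\lambda_0\in\Lambda$. Minimality produces $g_0\in\mathcal{F}$ with $g_0(\lambda)=0$ for $\lambda\in\Lambda\setminus\{\lambda_0\}$ and $g_0(\lambda_0)=1$. I would then set $F(z):=(z-\lambda_0)g_0(z)$. This $F$ is entire, $F\not\equiv 0$, it vanishes on all of $\Lambda$, and it has a simple zero at $\lambda_0$; moreover $\frac{F}{z-\lambda_0}=g_0\in\mathcal{F}$, so $\frac{F}{z-\lambda}\in\mathcal{F}$ for every $\lambda\in\Lambda$ by the remark above. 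If $F$ had a zero not accounted for by the list of simple zeros along $\Lambda$ — a zero at some $w\notin\Lambda$, or a zero of order $\geq 2$ at some $w\in\Lambda$ — then $\frac{F}{z-w}\in\mathcal{F}$ by the division property and it still vanishes at every point of $\Lambda$, so completeness forces $\frac{F}{z-w}\equiv 0$, contradicting $F\not\equiv 0$. Hence $F$ has simple zeros exactly on $\Lambda$. Finally, if $T$ is entire, $T\not\equiv 0$ and $FT\in\mathcal{F}$, then $FT$ is a nonzero element of $\mathcal{F}$ vanishing on all of $\Lambda$, again contradicting completeness; so no such $T$ exists.

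\emph{Existence of $F$ $\Rightarrow$ completeness and minimality.} Given such an $F$, for minimality I would fix $\lambda_0\in\Lambda$ and set $g_{\lambda_0}:=\frac{F(z)}{z-\lambda_0}\in\mathcal{F}$. Since $F$ has simple zeros exactly on $\Lambda$, $g_{\lambda_0}$ vanishes on $\Lambda\setminus\{\lambda_0\}$ while $g_{\lambda_0}(\lambda_0)=F'(\lambda_0)\neq 0$; thus $\langle g_{\lambda_0},k_\lambda\rangle=0$ for $\lambda\neq\lambda_0$ and $\langle g_{\lambda_0},k_{\lambda_0}\rangle\neq 0$, so $k_{\lambda_0}\notin\closspan\{k_\lambda:\lambda\in\Lambda\setminus\{\lambda_0\}\}$, and since $\lambda_0$ is arbitrary the system is minimal. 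For completeness, let $g\in\mathcal{F}$ satisfy $\langle g,k_\lambda\rangle=0$, i.e. $g(\lambda)=0$, for all $\lambda\in\Lambda$. As the zeros of $F$ on $\Lambda$ are simple and $g$ vanishes there, $T:=g/F$ is entire and $FT=g\in\mathcal{F}$; by hypothesis $T\equiv 0$, hence $g\equiv 0$, so the system is complete.

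The only genuinely analytic ingredient is the division property of $\mathcal{F}$ and its finite iteration; everything else is the Paley--Wiener-type dictionary familiar from Theorem 4 in Lecture 18 of \cite{Lev}. The point that needs care is the ``exactly on $\Lambda$, simple'' clause: one must check that \emph{any} unaccounted zero of the constructed $F$ — including a possible jump of multiplicity at a point of $\Lambda$ itself — can be divided out while staying in $\mathcal{F}$ and while still vanishing on the whole of $\Lambda$, which is precisely what allows completeness to produce the contradiction.
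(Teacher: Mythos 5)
Your proof is correct and follows essentially the same route as the paper: minimality/biorthogonality supplies the function $F=(z-\lambda_0)g_0$, and completeness together with the division property of $\mathcal{F}$ (applied via the identity $\frac{z-\mu}{z-\nu}=1+\frac{\nu-\mu}{z-\nu}$, which is needed since $F$ itself need not lie in $\mathcal{F}$) forces the zeros to be simple and exactly on $\Lambda$ and rules out a nontrivial $T$, while your sufficiency argument matches the paper's. The only difference is cosmetic: the paper builds $F$ from the whole biorthogonal family and dismisses the zero-set verification with ``it is easy to see,'' whereas you carry that verification out explicitly.
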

\begin{proof}
{\it Necessity.}The system $\{k_\lambda\}_{\lambda\in\Lambda}$ has a biorthogonal system which we will call $\{F_\lambda\}_{\lambda\in\Lambda}$. We know that 
$F_{\lambda_1}(z)\frac{z-\lambda_1}{z-\lambda_2}\in\mathcal{F}$ for any $\lambda_1,\lambda_2\in\Lambda$. This function vanishes at the points $\lambda\in \Lambda\setminus\{\lambda_2\}$ and so it equals $F_{\lambda_2}$ up to a multiplicative constant. Hence, the function $c_\lambda F_\lambda(z)(z-\lambda)$ does not depend on $\lambda$ for suitable coefficients $c_\lambda$. Denote it by $F$. It is easy to see that $F$ satisfies the required properties.

{\it Sufficiency.} Assume that such $F$ exists. From the inclusion $\frac{F(z)}{z-\lambda_0}\in\mathcal{F}$ we conclude that the system $\{k_\lambda\}_{\lambda\in\Lambda\setminus\{\lambda_0\}}$ is not complete. On the other hand, if the whole system $\{k_\lambda\}_{\lambda\in\Lambda}$ is not complete, then there exists $T$ such that $FT\in\mathcal{F}$.
\end{proof}
The function $F$ from Lemma \ref{l2} is called {\it a generating function} of $\Lambda$. So, the following theorem is the reformulation of Theorem \ref{mainth} in terms of the Fock space.
\begin{theorem}
If $\{k_\lambda\}$ is a complete and minimal system of reproducing kernels in $\mathcal{F}$ and $F$ is the generating function of this system, then the system $\bigl{\{}\frac{F(z)}{z-\lambda}\bigr{\}}_{\lambda\in\Lambda}$ is also complete.
\label{fokth}
\end{theorem}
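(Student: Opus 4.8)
The plan is to argue by contradiction: suppose the system $\{F(z)/(z-\lambda)\}_{\lambda\in\Lambda}$ is not complete in $\mathcal{F}$. Then there is a nonzero $G\in\mathcal{F}$ orthogonal to every $F(z)/(z-\lambda)$. The orthogonality relations against the reproducing kernels $k_\lambda$ show that $F(z)/(z-\lambda)$ is (a constant multiple of) the biorthogonal element $F_\lambda$, so the condition on $G$ says $\langle G, F_\lambda\rangle = 0$ for all $\lambda$; equivalently, expanding $G$ formally in the Fock-space Fourier series with respect to $\{k_\lambda\}$, all the coefficients of $G$ vanish. The heart of the matter is to convert this vanishing into an honest factorization statement: I expect that the functional $H \mapsto \langle H, G\rangle$, when tested against the reproducing kernels, produces an entire function that is divisible by $F$. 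Concretely, consider the entire function
\[
\Phi(w) := \overline{\langle G, k_{\bar w}\rangle} = \text{(up to conjugation) } G \text{ evaluated via the kernel},
\]
so $\Phi$ is essentially $G$ itself (entire, in a suitable growth class). The orthogonality $\langle G, F(z)/(z-\lambda)\rangle = 0$ should translate, after moving $F$ out of the inner product, into the statement that a certain entire transform of $G$ vanishes at every $\lambda$, hence is divisible by $F$ since $F$ has simple zeros exactly on $\Lambda$.

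More precisely, here is the mechanism I would try to make rigorous. For $H\in\mathcal{F}$ define the entire function $(\mathcal{T}H)(z) = \langle H, F/(\,\cdot\, - z)\rangle$ where the inner product is in the $z$-variable of the slashed function; this is entire in $z$ because $F/(\,\cdot\,-z)\in\mathcal{F}$ depends analytically on $z$. Taking $H = G$, the hypothesis gives $(\mathcal{T}G)(\lambda) = 0$ for all $\lambda\in\Lambda$, so $(\mathcal{T}G)(z) = F(z) T(z)$ for some entire $T$ (using that $F$ vanishes simply precisely on $\Lambda$, and that $\mathcal{T}G$ inherits enough growth control from $G\in\mathcal{F}$ and the Fock structure that the quotient by $F$ is entire — a Weierstrass-type division). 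The key identity I want is a reproducing-type formula along the lines of
\[
F(z)\,\langle H, k_z\rangle \;=\; \big(\text{correction involving } \langle H, F/(\,\cdot\,-z)\rangle\big),
\]
i.e. $F(z)\,\overline{H(\bar z)}$ (in appropriate normalization) equals $(\mathcal{T}H)(z)$ plus a lower-order term, so that $\mathcal{T}G = F\cdot T$ forces $G$ itself to be a constant times $T$, and then $F\cdot G/(\text{const}) = \mathcal{T}G \in\mathcal{F}$ would exhibit a nontrivial entire $T=G$ with $FT\in\mathcal{F}$, contradicting Lemma \ref{l2}. So the logical skeleton is: noncompleteness of $\{F/(z-\lambda)\}$ $\Rightarrow$ existence of $G\ne 0$ with $FG\in\mathcal{F}$ (after identifying $G$ with the quotient $\mathcal{T}G/F$) $\Rightarrow$ contradiction with the defining property of the generating function.

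The step I expect to be the main obstacle is establishing cleanly that the transform $\mathcal{T}G$ (or whatever auxiliary entire function encodes the orthogonality) is genuinely divisible by $F$ as an \emph{entire} function and that the quotient lands back in $\mathcal{F}$ — in other words, controlling the growth so that the Weierstrass division does not introduce a spurious pole or blow up the Fock norm. Here one must use that $F(z)/(z-\lambda)\in\mathcal{F}$ (so $F$ has Fock-type growth locally, comparable to $e^{\pi|z|^2/2}$ off the zeros) together with the Cauchy–Schwarz bound $|\langle G, F/(\,\cdot\,-z)\rangle| \le \|G\|\,\|F/(\,\cdot\,-z)\|_{\mathcal{F}}$ to get a pointwise majorant for $\mathcal{T}G$ of the right order, and then invoke that an entire function of order $\le 2$ divided by another with its zeros, still of controlled type, remains in $\mathcal{F}$ when the numerator's indicator is dominated appropriately. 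A secondary technical point is justifying that the inner products can be manipulated (pulling the multiplier $F$ in and out, interchanging with the pairing) — this is where the concrete form of the Fock inner product, $\langle H_1,H_2\rangle = \int_{\mathbb{C}} H_1\overline{H_2}\,e^{-\pi|z|^2}dm$, and the reproducing property $H(z) = \langle H, k_z\rangle$ do the work, and everything should be legitimate by dominated convergence once the growth bounds above are in hand.
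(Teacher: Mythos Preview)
There is a genuine gap. Your transform $(\mathcal{T}G)(z) = \langle G, F/(\cdot - z)\rangle_{\mathcal{F}}$ is only defined for $z \in \Lambda$: when $z \notin \Lambda$ the function $\zeta \mapsto F(\zeta)/(\zeta - z)$ has a pole at $\zeta = z$ and is not an element of $\mathcal{F}$, so the inner product is unavailable and the claim that it ``depends analytically on $z$'' is false as stated. On $\Lambda$ itself the hypothesis already says $(\mathcal{T}G)(\lambda)=0$ identically, so there is nothing nontrivial to divide by $F$. Even if you regularize by writing the integral directly (the simple pole is locally integrable against area measure), the integrand involves $1/(\bar\zeta - \bar z)$ and the resulting function of $z$ has no reason to be holomorphic; you cannot then invoke divisibility by the entire function $F$. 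Your Cauchy--Schwarz plan $|\mathcal{T}G(z)| \le \|G\|\,\|F/(\cdot - z)\|_{\mathcal{F}}$ likewise collapses, since the right-hand norm is infinite off $\Lambda$.

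The paper does not attempt such a transform. It introduces the auxiliary lattice $\mathcal{Z}=\Z+i\Z$ with its Weierstrass $\sigma$-function, expands the orthogonal element $S$ (your $G$) formally in the kernels $\{k_w\}_{w\in\mathcal{Z}\setminus\{0\}}$ with coefficients $b_w$ obeying $|b_w|^2\lesssim\log(1+|w|)$, and uses a partial-fraction identity with \emph{three} points $\lambda_1,\lambda_2,\lambda_3\in\Lambda$ so that the resulting series over $\mathcal{Z}$ actually converges. This yields an honest meromorphic identity from which one extracts an entire function $T$ with $T(w)=\overline{b_w}\,\sigma_0'(w)/\|k_w\|$ on $\mathcal{Z}\setminus\{0\}$. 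The contradiction then comes from a dichotomy you do not have: if $T$ has at least two zeros $t_1,t_2$, one shows $F\cdot T/((z-t_1)(z-t_2))\in\mathcal{F}$, contradicting completeness of $\{k_\lambda\}$; if $T$ has at most one zero, then $T(z)=e^{P(z)}(a_1 z - a_0)$ with $\deg P\le 2$, which is incompatible with the lattice bound $|T(w)|\lesssim |w|^{-1}\log^{1/2}(1+|w|)$. The two missing ingredients in your sketch are precisely (i) a legitimate entire object encoding the orthogonality, and (ii) quantitative control on it sufficient to force $FT$ (or a close variant) into $\mathcal{F}$; both are supplied by the lattice interpolation, not by a direct manipulation of $F/(\cdot - z)$.
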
 
In the last section we will prove this theorem.

\section{Completeness of biorthogonal system}

\subsection{Preliminary steps} Let $\sigma$ be the Weierstrass $\sigma$-function associated to the lattice $\mathcal{Z}=\{z: z=m+in, m,n\in\mathbb{Z}\}$,
$$\sigma(z)=z\prod_{\lambda\in\mathcal{Z}\setminus\{0\}}\biggl{(}1-\frac{z}{\lambda}\biggr{)}e^{\frac{z}{\lambda}+\frac{z^2}{\lambda^2}}.$$
 It is well known that $|\sigma(z)|\asymp\dist(z,\mathcal{Z})e^{\pi|z|^2\slash2}$; see e.g. \cite[p. 108]{S2}. From this estimate it is easy to see that system $\bigl{\{}\frac{k_w}{\|k_w\|}\bigr{\}}_{w\in\mathcal{Z}\setminus\{0\}}$ is a complete and minimal system and $\sigma_0(z):=\frac{\sigma(z)}{z}$ is its generating function. The system
 $\bigl{\{}\frac{\|k_w\|}{\sigma'_0(w)}\cdot\frac{\sigma_0(z)}{z-w}\bigr{\}}$ is  the biorthogonal system.
With any function $S\in\mathcal{F}$ we can associate its formal Fourier series {\it with respect to the system } $\bigl{\{}\frac{k_w}{\|k_w\|}\bigr{\}}_{w\in\mathcal{Z}\setminus\{0\}}$ 
$$S\sim \sum_{w\in \mathcal{Z}\setminus\{0\}}b_w\frac{k_w}{\|k_w\|},\quad\qquad b_w:=
\biggl{(}S(z),\frac{\|k_w\|}{\sigma'_0(w)}\cdot\frac{\sigma_0(z)}{z-w}\biggr{)}_\mathcal{F}.$$
 This series is more regular than an arbitrary Fourier series \eqref{mainser}. For example this series admits a linear summation method. In particular, we know that the sequence $\{b_w\}$ is non-trivial.
 We need the following straightforward estimate of coefficients
\begin{equation}
|b_w|^2\leq \|S\|^2\cdot\biggl{\|}\frac{\|k_w\|}{\sigma'_0(w)}\cdot\frac{\sigma_0(z)}{z-w}\biggr{\|}^2\lesssim
\|S\|^2\cdot \biggl{\|}\frac{w\sigma_0(z)}{z-w}\biggr{\|}^2$$
$$\lesssim\|S\|^2\cdot\biggl{[}\int_{|z|<2|w|}|\sigma^2_0(z)|e^{-\pi|z|^2}dm(z)+1\biggr{]}\lesssim\|S\|^2\cdot\log(1+|w|).
\label{best}
\end{equation}
\begin{lemma}
If $F$ is the generating function of a complete and minimal system of reproducing kernels $\{k_\lambda\}_{\lambda\in\Lambda}$ in $\mathcal{F}$ and $\Lambda\cap\mathcal{Z}=\emptyset$ , then for any triple $\lambda_1,\lambda_2,\lambda_3\in\Lambda$ we have
\begin{equation}
\biggl{(}\frac{F(z)}{(z-\lambda_1)(z-\lambda_2)(z-\lambda_3)}, S\biggr{)}_{\mathcal{F}}=\sum_{w\in\mathcal{Z}\setminus\{0\}}\frac{F(w)b_w}{(w-\lambda_1)(w-\lambda_2)(w-\lambda_3)\|k_w\|}
\label{intform}
\end{equation}
for any $S\in\mathcal{F}$.
\label{l3}
\end{lemma}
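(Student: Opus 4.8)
The plan is to expand both sides against the reproducing kernel structure of $\mathcal{F}$ and reduce the identity to the known (regular) Fourier expansion $S \sim \sum_w b_w k_w/\|k_w\|$ with respect to the lattice system. First I would fix the triple $\lambda_1,\lambda_2,\lambda_3 \in \Lambda$ and set $G(z) := F(z)\big/\bigl((z-\lambda_1)(z-\lambda_2)(z-\lambda_3)\bigr)$. By hypothesis $F(z)/(z-\lambda)\in\mathcal F$ for every $\lambda\in\Lambda$, and dividing further by the two remaining linear factors only improves decay, so $G\in\mathcal F$; hence the left-hand side of \eqref{intform} is a well-defined inner product. The strategy is then: (a) write $S$ via its Fourier series in the lattice system, (b) interchange the inner product with the sum, and (c) evaluate $\bigl(G, k_w/\|k_w\|\bigr)_{\mathcal F} = \overline{G(w)}\big/\|k_w\|$... wait, more precisely $\bigl(G,k_w\bigr)_{\mathcal F}=G(w)$ by the reproducing property, so the $w$-th term becomes $b_w G(w)/\|k_w\|$, which is exactly the summand on the right-hand side of \eqref{intform} since $G(w) = F(w)\big/\bigl((w-\lambda_1)(w-\lambda_2)(w-\lambda_3)\bigr)$ and the condition $\Lambda\cap\mathcal Z=\emptyset$ guarantees no division by zero occurs.

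The crux is justifying step (b): the interchange of the $\mathcal F$-inner product with the infinite sum $\sum_{w}$. The Fourier series $\sum_w b_w k_w/\|k_w\|$ need not converge in the norm of $\mathcal F$ (that is precisely why this lemma is needed rather than trivial), so I cannot simply pull $G$ inside by continuity. Instead I would exploit the extra decay of $G$: because $G$ has three linear factors in the denominator, $G$ decays like $|z|^{-3}$ relative to $F$, and — comparing against the Weierstrass product bound $|\sigma_0(z)|\asymp \dist(z,\mathcal Z)|z|^{-1}e^{\pi|z|^2/2}$ from the preliminary discussion — the pairing $\bigl(G, \sigma_0(z)/(z-w)\bigr)_{\mathcal F}$, hence the relevant partial-sum remainders, can be controlled. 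Concretely, I expect to write $\bigl(G, S\bigr)_{\mathcal F}$ by first expanding $S$ as a finite partial sum $S_n := \sum_{|w|\le n} b_w k_w/\|k_w\|$ plus a remainder, apply the reproducing property to the finite sum (which is legitimate term by term), and then show the remainder contributes $0$ in the limit using the coefficient bound $|b_w|^2 \lesssim \|S\|^2\log(1+|w|)$ from \eqref{best} together with the rapid decay of $G$ and of $1/\|k_w\| = e^{-\pi|w|^2/2}$ — the product $G(w)/\|k_w\|$ decays super-polynomially while $|b_w|$ grows at most logarithmically, so $\sum_w |b_w|\,|G(w)|/\|k_w\| < \infty$ absolutely.

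The main obstacle is making the interchange rigorous without norm convergence of the series: one must verify that the ``defect'' between $S$ and its partial sums $S_n$ — which lives in the annihilator issues of the (possibly non-complete-acting) lattice expansion — pairs to zero against $G$. The clean way is to observe that $\bigl(G,\,S - S_n\bigr)_{\mathcal F}$ can be evaluated by a contour/area integral against $\sigma_0$, and the bound \eqref{best} shows each coefficient grows only like $\sqrt{\log|w|}$, which is dominated by the Gaussian weight appearing through $\|k_w\|^{-1}$ and the polynomial gain from the triple pole; once absolute summability of the right-hand side of \eqref{intform} is established, dominated convergence closes the argument. I would also record that the right-hand side is independent of how we order $\mathcal Z\setminus\{0\}$, which the absolute convergence guarantees.
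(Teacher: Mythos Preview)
Your plan is the same as the paper's in outline --- set $G=F/\prod_j(z-\lambda_j)\in\mathcal F$ and pair $G$ against the lattice Fourier expansion of $S$ --- but the justification of the interchange contains a real gap. The claim that $G(w)/\|k_w\|$ decays super-polynomially is false: from $F/(z-\lambda_1)\in\mathcal F$ you only get the pointwise bound $|F(w)/(w-\lambda_1)|\le\|F/(\cdot-\lambda_1)\|\,\|k_w\|$, hence $|G(w)|/\|k_w\|\lesssim|w|^{-2}$. Against $|b_w|\lesssim\sqrt{\log(1+|w|)}$ this yields $\sum_{w\in\mathcal Z\setminus\{0\}}|w|^{-2}\sqrt{\log|w|}$, which diverges over the two-dimensional lattice, so your absolute-summability argument does not close. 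The paper obtains convergence by a different split: it invokes the $\ell^2$ sampling bound $\sum_{w}|H(w)|^2/\|k_w\|^2<\infty$ valid for every $H\in\mathcal F$ (applied to $H=F/(z-\lambda_1)$), combines it with $\{b_w/((w-\lambda_2)(w-\lambda_3))\}\in\ell^2$ from \eqref{best}, and then applies Cauchy--Schwarz. That sampling fact is the ingredient missing from your estimate.

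Even granting absolute convergence of the right-hand side, your remainder argument is circular. Since $S_n$ does not converge to $S$ in norm, the only expression you have for $(G,S-S_n)$ is $(G,S)-\sum_{|w|\le n}b_w\,G(w)/\|k_w\|$; showing this tends to zero \emph{is} the identity to be proved, not a step toward it. The paper sidesteps this: once the right-hand side is seen to converge, the estimate $\|\{b_w/((w-\lambda_2)(w-\lambda_3))\}\|_{\ell^2}\lesssim\|S\|$ shows that $S\mapsto\text{RHS}(S)$ is a \emph{bounded} linear functional on $\mathcal F$. One then checks LHS $=$ RHS whenever $S$ is a finite linear combination of $\{k_w\}_{w\in\mathcal Z\setminus\{0\}}$ (immediate from biorthogonality and the reproducing property) and extends to all $S$ by the completeness of that system. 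This continuity-plus-density step is what should replace your attempted control of $(G,S-S_n)$.
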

\begin{proof} It is well known that for any function $H\in\mathcal{F}$ we have $\sum_{w\in\mathbb{Z}\setminus\{0\}}\frac{|H(w)|^2}{\|k_w\|^2}<\infty$ (see e.g. \cite{BMO}). So, $\bigl{\{}\frac{F(w)}{(w-\lambda_1)\|k_w\|}\bigr{\}}\in\ell^2$. From \eqref{best} we conclude that $\bigl{\{}\frac{b_w}{(w-\lambda_2)(w-\lambda_3)}\bigr{\}}\in\ell^2$. Hence, the series on the right hand side of \eqref{intform} converges and defines a bounded linear functional on $\mathcal{F}$. On the other hand, the left hand side and the right hand side of \eqref{intform} coincides if $S$ is a finite linear combination of $\{k_w\}_{w\in\mathcal{Z}\setminus\{0\}}$.
\end{proof}

\subsection{Proof of Theorem \ref{mainth}} Assume the contrary. Then there exists a function $S\in\mathcal{F}$ such that $S\perp\frac{F(z)}{z-\lambda}$ for any $\lambda\in\Lambda$.  Without loss of generality we can assume that $\Lambda\cap\mathcal{Z}=\emptyset$.  From the identity
$$\frac{1}{(z-\lambda_1)(z-\lambda_2)(z-\lambda_3)}=\sum_{k=1}^3\frac{c_k}{z-\lambda_k}$$
we get that for any triple $\lambda_1,\lambda_2,\lambda_3\in\Lambda$ 
$$\biggl{(}\frac{F(z)}{(z-\lambda_1)(z-\lambda_2)(z-\lambda_3)}, S\biggr{)}=0.$$
Fix two arbitrary points $\lambda_1,\lambda_2\in\Lambda$. 
Put 
$$L(z)=\sum_{w\in\mathcal{Z}\setminus\{0\}}\frac{F(w)\overline{b_w}}{(z-w)(w-\lambda_1)(w-\lambda_2)\|k_w\|}.$$
Using Lemma \ref{l3} we get that meromorphic function $L$ vanishes at $\Lambda\setminus\{\lambda_1,\lambda_2\}$. Hence,
\begin{equation}
\sum_{w\in\mathcal{Z}\setminus\{0\}}\frac{F(w)\overline{b_w}}{(z-w)(w-\lambda_1)(w-\lambda_2)\|k_w\|}=\frac{F(z)T(z)}{(z-\lambda_1)(z-\lambda_2)\sigma_0(z)},
\label{leq}
\end{equation}
where $T$ is some non-zero entire function. Comparing the residues of both sides of \eqref{leq} we get $T(w)=\overline{b_w}\frac{\sigma'_0(w)}{\|k_w\|}$, $w\in\mathcal{Z}\setminus\{0\}$. 
Assume that $T$ has at least two zeros $t_1,t_2$, then
\begin{equation}
F(z)\frac{T(z)}{(z-t_1)(z-t_2)}=\sum_{w\in\mathcal{Z}\setminus\{0\}}\frac{|w|^{1\slash2}\sigma_0(z)}{z-w}\cdot\frac{F(w)\overline{b_w}}{(w-t_1)(w-t_2)|w|^{1\slash2}\|k_w\|}.
\label{FT}
\end{equation}
From the inclusion $\bigl{\{}\frac{F(w)}{(w-\lambda_1)\|k_w\|}\bigr{\}}\in\ell^2$ and estimates $|b_w|^2\lesssim\log(1+|w|)$,
$\biggl{\|}\frac{|w|^{1\slash2}\sigma_0(z)}{z-w}\biggr{\|}\lesssim1$ we get that the right hand side of \eqref{FT}  belongs to $\mathcal{F}$. This contradicts the completeness of sequence $\{k_\lambda\}_{\lambda\in\Lambda}$.

Hence $T$ has at most one zero. So, $T(z)=e^{P(z)}(a_1z-a_0)$, where $P$ is a polynomial of degree at most $2$. This contradicts the estimate $|T(w)|=\bigl{|}b_w\frac{\sigma'_0(w)}{\|k_w\|}\bigr{|}\lesssim\frac{\log^{1\slash2}(1+|w|)}{|w|}$, $w\in\mathcal{Z}\setminus\{0\}$.
\qed

\subsection{Concluding remarks}

1. The author wonders if the following
statement (stronger that Theorem \ref{mainth}) is true:

\begin{question}
Any complete and minimal Gabor system is a strong Markushevich basis. Which means that any vector $f\in L^2(\mathbb{R})$ belongs to the closed linear span of members of its Fourier series \eqref{mainser} (see \cite{BBB} and references therein).
\end{question}

For systems of complex exponentials $\{e^{i\lambda_n t}\}$ in $L^2$ of an interval this is not true; see \cite[Theorem 2]{BBB}.

\medskip

2. Using our methods one can prove the completeness of the system $\{\frac{F(z)}{z-\lambda}\}_{\lambda\in\Lambda}$ under weaker assumptions that in Theorem \ref{fokth} (e.g if $F\in \mathcal{F}$ and $z^nF\notin\mathcal{F}$, $n\in\mathbb{N}$). Nevertheless we prefer to formulate the result as it is to avoid inessential technicalities.

\medskip

{\bf Acknowledgements.}
 A part of the present work was done when author was
visiting Norwegian University of Science and Technology, whose hospitality is greatly appreciated. 


\end{document}